\newtheorem{thm}{Theorem}[section]
\newtheorem{lem}[thm]{Lemma}
\DeclareMathAlphabet{\mathbbmsl}{U}{bbm}{m}{sl}
\title{Star Saturation Number of Random Graphs\\[4mm]}
\author{
{\Large A. Mohammadian} \qquad   \qquad  {\Large B. Tayfeh-Rezaie}\\[2mm]
School of Mathematics,\\
Institute for Research in Fundamental Sciences\,(IPM),\\
P.O. Box 19395-5746, Tehran, Iran\\[2mm]
\textsf{ali\_m}@\textsf{ipm.ir}  \qquad  \qquad  \textsf{tayfeh-r}@\textsf{ipm.ir}\\[3mm]}
\date{}
\begin{document}

\maketitle

\begin{abstract}
For a  given graph   $F$, the $F$-saturation number of  a graph $G$ is the minimum  number of edges in  an  edge-maximal $F$-free subgraph of $G$.
Recently, the $F$-saturation number of  the Erd\H{o}s$\text{\bf--}$R\'enyi random graph $\mathbbmsl{G}(n, p)$ has been   determined asymptotically for any complete graph  $F$.
In this paper, we give an asymptotic formula for    the  $F$-saturation number of  $\mathbbmsl{G}(n, p)$  when   $F$ is   a  star graph.\\

\noindent {\bf Keywords:}    Random graph, Saturation, Star graph.\\[1mm]
\noindent {\bf AMS Mathematics Subject Classif{}ication\,(2010):}   05C35, 05C80.\\[2mm]
\end{abstract}

\section{Introduction}

All graphs in this paper  are assumed to be f{}inite,   undirected,  and without loops or multiple edges.
The vertex set and the edge set of a graph $G$ are  denoted by $V(G)$ and $E(G)$,  respectively. For any subset $S$ of $V(G)$, the induced subgraph of $G$ on $S$ is denoted by $G[S]$. For
an integer $n\geqslant1$ and a real number $p\in[0, 1]$, we denote by $\mathbbmsl{G}(n, p)$  the
probability space of all graphs on a f{}ixed vertex set of size $n$ where every two  distinct    vertices  are adjacent   independently with probability $p$.

In 1941, Tur\'an  posed one of the  foundational problems   in extremal graph theory \cite{tur}. His question was about the  maximum number
of edges in   a   graph on $n$ vertices  without  a copy of a given graph  $F$ as a subgraph, a  parameter  which is now denoted by $\text{\sl ex}(n, F)$.
A dual idea  called   `saturation number'  was introduced by Zykov \cite{zyk} and later independently by Erd\H{o}s, Hajnal,   and Moon \cite{erd}.
It  asks for  the minimum  number of edges in an edge-maximal $F$-free graph  on  $n$ vertices. We below present this  notion  in a more general form.

Fix a positive integer  $n$ and a graph $F$.
A graph  $G$ is   called {\it $F$-saturated} if  $G$ contains no subgraph   isomorphic to $F$    but
each graph obtained from $G$  by joining a pair of non-adjacent vertices
contains  at least one     copy of $F$   as a subgraph.  In other words,  $G$   is $F$-saturated if and only if it is an edge-maximal $F$-free graph. So,  $\text{\sl ex}(n, F)$ is equal to
the maximum number of edges  in  an $F$-saturated  graph  on   $n$ vertices.  The {\it saturation function} of $F$, denoted $\text{\sl sat}(n, F)$, is the minimum number of edges in  an $F$-saturated
graph on   $n$ vertices.
For instance,  it was  proved by Erd\H{o}s, Hajnal, and
Moon     \cite{erd}    that $$\text{\sl sat}(n,  K_r)=(r-2)n-\mathsmaller{{{r-1}\choose{2}}},$$ where $n\geqslant r\geqslant 2$ and $K_r$ is the complete graph on $r$ vertices.

For a given  graph $G$,  a spanning subgraph  $H$  of $G$  is  said to be an  {\it $F$-saturated subgraph of $G$} if
$H$ contains no subgraph   isomorphic to $F$
but each  graph obtained   by adding an  edge from  $E(G)\setminus E(H)$ to  $H$ has  at least one     copy of $F$   as a subgraph. The  minimum number of edges  in  an  $F$-saturated  subgraph of $G$ is  denoted by $\text{\sl sat}(G, F)$.  Thus, $\text{\sl sat}(n, F)$ is  by def{}inition equal to  $\text{\sl sat}(K_n, F)$. We refer the reader to   \cite{fau}  and the  references therein for a survey on graph saturation.

In recent years,   a new trend in   extremal graph theory has been developed to extend   the  classical results, such as Ramsey's  and Tur\'an's theorems, to random analogues. The study   reveals   the   behavior  of extremal parameters for a    typical graph. For instance,
Kor\'andi and   Sudakov   initiated  the study of graph saturation for random graphs    very recently  \cite{kor}.  They   proved  for every f{}ixed $p\in(0, 1)$ and f{}ixed  integer  $r\geqslant3$ that
$$\text{\sl sat}\big(\mathbbmsl{G}(n, p), K_r\big)=\big(1+o(1)\big)n\log_{\tfrac{1}{1-p}}n$$
with high probability. Let us recall    that, for a  sequence  $X_1, X_2, \ldots$ of random variables, we write `$X_n=o(1)$ with high probability' if   $$\mathlarger{\lim}_{n\rightarrow\infty}\mathbbmsl{P}\left(\left|X_n\right|\leqslant\epsilon\right)=1,$$
for any $\epsilon>0$.

Let $K_{1, r}$ be  the   star graph on $r+1$ vertices. In this paper, we investigate    the $K_{1, r}$-saturation number of  $\mathbbmsl{G}(n, p)$. The classical version   was resolved  by K\'aszonyi  and   Tuza    \cite{kas}, where they  proved  that
$$\text{\sl sat}(n, K_{1, r})=\left\{\begin{array}{ll}
{{r}\choose{2}}+{{n-r}\choose{2}}, &  \mbox{ if } r+1\leqslant n\leqslant\tfrac{3r}{2}\mbox{;}\\
\vspace{-1mm}\\
\left\lceil\tfrac{(r-1)n}{2}-\tfrac{r^2}{8}\right\rceil,  &   \mbox{ if } n\geqslant\tfrac{3r}{2}\mbox{.}\\
\end{array}\right.$$
The  f{}irst non-trivial case, namely   $r=2$,  is  especially  interesting  for the reason that        $\text{\sl sat}(G, K_{1, 2})$  is by def{}inition  equal to the minimum cardinality of a maximal matching in $G$.
It has been  proven by  Zito   \cite{zit} that
\begin{equation}\label{our}\mathlarger{\lim}_{n\rightarrow\infty}\mathbbmsl{P}\left(\tfrac{n}{2}-\log_{\tfrac{1}{1-p}}(np)<\text{\sl sat}\big(\mathbbmsl{G}(n, p), K_{1, 2}\big)<\tfrac{n}{2}-\log_{\tfrac{1}{1-p}}\sqrt{n}\right)=1.\end{equation}
Here we  show that with high probability
$$\text{\sl sat}\big(\mathbbmsl{G}(n, p), K_{1, r}\big)=\frac{(r-1)n}{2}-\big(1+o(1)\big)(r-1)\log_{\tfrac{1}{1-p}}n,$$
for every f{}ixed $p\in(0, 1)$ and  f{}ixed integer $r\geqslant2$. Note that,  for $r=2$, our result gives an upper bound  stronger than   \eqref{our} whereas our lower bound is weaker.
It is f{}inally  worth noting  that for    complete graphs    the saturation number of random graphs is much larger than the classical version  while the parameter   for star graphs   is  slightly   smaller  than its  classical value.

\section{Results}

Let $G$ be a graph and $k$ be a nonnegative   integer. A subset $S$ of $V(G)$  is called {\it $k$-independent}
if the maximum degree of $G[S]$ is at most  $k$. The  {\it $k$-independence number}  of $G$, denoted
by $\alpha_k(G)$, is def{}ined as the maximum cardinality of a $k$-independent set in  $G$. In particular, $\alpha_0(G)=\alpha(G)$ is the usual independence
number of $G$. The following  theorem is well known and  is   proved as    Theorem\,7.3 in    \cite{fri}.

\begin{thm}\label{matula}
{\rm (Matula \cite{mat})}
For any f{}ixed number $p\in(0, 1)$,
$$\alpha(\mathbbmsl{G}(n, p))=\big(2+o(1)\big)\log_{\tfrac{1}{1-p}}n$$
with high probability.
\end{thm}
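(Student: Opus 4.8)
The plan is to apply the first- and second-moment methods to the random variable $X_k$ that counts the independent sets of size $k$ in $\mathbbmsl{G}(n, p)$. Throughout, write $q=1-p$ and $b=\log_{\tfrac{1}{q}}n$, so that $q^{b}=n^{-1}$. Since an independent set of size $k$ is exactly a $k$-subset of vertices spanning no edge, and each of the $\binom{k}{2}$ possible edges is absent independently with probability $q$, we have $\mathbbmsl{E}[X_k]=\binom{n}{k}\,q^{\binom{k}{2}}$.

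First I would establish the upper bound. Fix $\epsilon>0$ and set $k=\lceil(2+\epsilon)b\rceil$. Using $\binom{n}{k}\leqslant(en/k)^{k}$, the logarithm of $\mathbbmsl{E}[X_k]$ is at most $k\bigl(\ln n-\tfrac{k-1}{2}\ln\tfrac{1}{q}+O(\log k)\bigr)$; since $k\geqslant(2+\epsilon)b$ and $b\ln\tfrac{1}{q}=\ln n$, the bracket is bounded above by a negative multiple of $\ln n$, so $\mathbbmsl{E}[X_k]\to0$. Markov's inequality then gives $\mathbbmsl{P}(\alpha(\mathbbmsl{G}(n,p))\geqslant k)=\mathbbmsl{P}(X_k\geqslant1)\leqslant\mathbbmsl{E}[X_k]\to0$, whence $\alpha(\mathbbmsl{G}(n,p))\leqslant(2+\epsilon)b$ with high probability.

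For the matching lower bound, set $k=\lfloor(2-\epsilon)b\rfloor$; the same estimate now yields $\mathbbmsl{E}[X_k]\to\infty$, and I would invoke Chebyshev's inequality in the form $\mathbbmsl{P}(X_k=0)\leqslant\mathbbmsl{E}[X_k^2]/\mathbbmsl{E}[X_k]^2-1$. Conditioning on the size $i$ of the intersection of two $k$-subsets gives
$$\frac{\mathbbmsl{E}[X_k^2]}{\mathbbmsl{E}[X_k]^2}=\sum_{i=0}^{k}t_i,\qquad\text{where}\qquad t_i=\frac{\binom{k}{i}\binom{n-k}{k-i}}{\binom{n}{k}}\,q^{-\binom{i}{2}}.$$
The term $t_0=\binom{n-k}{k}/\binom{n}{k}=1-o(1)$, so the goal is to prove that $\sum_{i=1}^{k}t_i=o(1)$.

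This second-moment estimate is the crux of the argument and the main obstacle. A short computation gives $t_{i+1}/t_i=\tfrac{(k-i)^2}{(i+1)(n-2k+i+1)}\,q^{-i}$, which reveals a competition between the combinatorial factors (favoring small $i$, since $k=\Theta(\log n)$) and the penalty $q^{-i}$ (favoring large $i$). Examining the sign of $\ln(t_{i+1}/t_i)$ shows that it is negative for small $i$ and positive for large $i$ with a single sign change, so the sequence $(t_i)_{i=1}^{k}$ first decreases and then increases and is therefore maximized at its two endpoints. Since $t_1=\Theta(k^2/n)$ and $t_k=1/\mathbbmsl{E}[X_k]$ are both $o(1/k)$, the whole sum of these $k$ terms is $o(1)$. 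Hence $\mathbbmsl{E}[X_k^2]/\mathbbmsl{E}[X_k]^2\to1$, forcing $X_k>0$ and so $\alpha(\mathbbmsl{G}(n,p))\geqslant(2-\epsilon)b$ with high probability. Letting $\epsilon\to0$ in the two bounds gives $\alpha(\mathbbmsl{G}(n,p))=(2+o(1))\log_{\tfrac{1}{1-p}}n$ with high probability.
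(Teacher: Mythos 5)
The paper itself gives no proof of this theorem --- it is quoted as known, citing Matula and Theorem 7.3 of Frieze--Karo\'nski --- and your first-/second-moment argument is precisely the standard proof from that reference (indeed, the second-moment sum $\sum_i t_i$ you bound is the very sum the paper reuses, via that citation, in the proof of its own main theorem). Your argument is correct; the only step left at sketch level is the valley-shape claim for $(t_i)_{i=1}^{k}$, i.e.\ the single sign change of $\ln(t_{i+1}/t_i)=2\ln(k-i)-\ln(i+1)-\ln(n-2k+i+1)+i\ln\tfrac{1}{q}$, which does hold: for $i\leqslant k/4$ the $-\ln(n-2k+i+1)$ term keeps it negative since $i\ln\tfrac{1}{q}\leqslant\tfrac{2-\epsilon}{4}\ln n$, for $i\geqslant 3k/4$ the linear term $i\ln\tfrac{1}{q}\geqslant\tfrac{3(2-\epsilon)}{4}\ln n$ makes it positive (for small $\epsilon$), and on the middle range it is strictly increasing in $i$ because its derivative is $\ln\tfrac{1}{q}-O(1/\log n)>0$, so the maximum of $t_i$ over $1\leqslant i\leqslant k$ is indeed attained at an endpoint.
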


The following easy observation can be proved using a straightforward union bound argument.
We apply  it to obtain    a generalized  version of Theorem \ref{matula}.

\begin{lem}\label{sudakov}
Let   $X$  be  a binomial random variable with parameters $n$ and $p\in(0, 1)$. Then $\mathbbmsl{P}(X\leqslant s)\leqslant{{n}\choose{s}}(1-p)^{n-s}$ for   any  $s\in\{0, 1, \ldots, n\}$.
\end{lem}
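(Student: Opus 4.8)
The plan is to realize $X$ as the number of successes among $n$ independent Bernoulli trials, each having success probability $p$, and then to dominate the event $\{X\leqslant s\}$ by a union of much simpler events. First I would observe that if at most $s$ of the trials succeed, then at least $n-s$ of them must fail; in particular, there is at least one subset $T\subseteq\{1,\ldots,n\}$ with $|T|=n-s$ all of whose trials result in failures. Consequently, the event $\{X\leqslant s\}$ is contained in the union $\bigcup_{|T|=n-s}A_T$, where $A_T$ denotes the event that every trial indexed by $T$ is a failure.

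Next I would apply the union bound to this containment. By independence of the trials, each event $A_T$ with $|T|=n-s$ has probability exactly $(1-p)^{n-s}$, and the number of such subsets $T$ is $\binom{n}{n-s}=\binom{n}{s}$. Summing the individual probabilities therefore gives $\mathbbmsl{P}(X\leqslant s)\leqslant\binom{n}{s}(1-p)^{n-s}$, which is precisely the claimed inequality. The overcounting that is inherent in the union bound is exactly the slack that converts the true tail probability into this clean binomial-coefficient bound.

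I do not anticipate any genuine obstacle, as the hint in the text already signals. The only point that warrants a moment's care is the containment step, namely the assertion that $\{X\leqslant s\}$ is covered by the events $A_T$ ranging over subsets of size \emph{exactly} $n-s$ rather than size at least $n-s$. This holds because any outcome with $n-s$ or more failures certainly contains some all-failure subset of the fixed size $(n-s)$, so restricting attention to $|T|=n-s$ loses nothing in the covering; everything else is a one-line computation.
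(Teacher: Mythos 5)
Your proof is correct and is exactly the argument the paper has in mind: the paper gives no written proof of this lemma, saying only that it follows from ``a straightforward union bound argument,'' and your covering of $\{X\leqslant s\}$ by the events $A_T$ over subsets $T$ of size $n-s$, followed by the union bound and the identity $\binom{n}{n-s}=\binom{n}{s}$, is precisely that argument, carried out carefully.
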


\begin{thm}\label{k}
For every f{}ixed number $p\in(0, 1)$ and f{}ixed integer $k\geqslant1$,
$$\alpha_k(\mathbbmsl{G}(n, p))=\big(2+o(1)\big)\log_{\tfrac{1}{1-p}}n$$
with high probability.
\end{thm}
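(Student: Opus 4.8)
The plan is to establish matching lower and upper bounds. For the lower bound, observe that every independent set of a graph is $k$-independent, since an independent set induces a subgraph of maximum degree $0\leqslant k$. Hence $\alpha_k(G)\geqslant\alpha(G)$ for every graph $G$, and Theorem \ref{matula} immediately gives $\alpha_k(\mathbbmsl{G}(n, p))\geqslant\big(2-o(1)\big)\log_{\tfrac{1}{1-p}}n$ with high probability. It therefore remains to prove the complementary upper bound: for every fixed $\epsilon>0$, with high probability there is no $k$-independent set of size $s:=\big\lceil(2+\epsilon)\log_{\tfrac{1}{1-p}}n\big\rceil$.

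To this end, I would fix a set $S$ of $s$ vertices and estimate the probability that $S$ is $k$-independent, then apply a union bound over all $\binom{n}{s}$ such sets (using that every subset of a $k$-independent set is again $k$-independent, so $\alpha_k\geqslant s$ forces a $k$-independent set of size exactly $s$). Since the subgraph induced on $S$ is distributed as $\mathbbmsl{G}(s, p)$, the task reduces to bounding the probability that $\mathbbmsl{G}(s, p)$ has maximum degree at most $k$.

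The key step, and the main obstacle, is to extract the correct constant from this probability. A naive bound that controls the degrees of half the vertices of $S$ toward the other half (these degree events being independent, so that Lemma \ref{sudakov} applies to each) only produces a factor of the form $(1-p)^{\,\approx s^2/4}$, which loses a factor of $2$ in the exponent and fails to beat $\binom{n}{s}$ near the threshold $2\log_{\tfrac{1}{1-p}}n$. Instead I would bound via the total edge count: if $\mathbbmsl{G}(s, p)$ has maximum degree at most $k$, then it has at most $ks/2$ edges, and its number of edges is a binomial random variable with parameters $\binom{s}{2}$ and $p$. Applying Lemma \ref{sudakov} with these parameters yields
$$\mathbbmsl{P}\big(\alpha_k(\mathbbmsl{G}(n, p))\geqslant s\big)\leqslant\binom{n}{s}\binom{\binom{s}{2}}{\lfloor ks/2\rfloor}(1-p)^{\binom{s}{2}-\lfloor ks/2\rfloor},$$
where the middle binomial coefficient now carries the full exponent $\binom{s}{2}\sim s^2/2$ on the factor $1-p$.

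Finally, I would take logarithms and track the order of each term. Writing $s\sim t\log_{\tfrac{1}{1-p}}n$ with $t=2+\epsilon$ and expressing everything in terms of $\ln n$ and $\ln\tfrac{1}{1-p}$, the two dominant contributions are $\ln\binom{n}{s}\leqslant s\ln n$ and $-\binom{s}{2}\ln\tfrac{1}{1-p}$, which together give
$$\big(t-\tfrac{t^2}{2}\big)\,\frac{(\ln n)^2}{\ln\tfrac{1}{1-p}}\,\big(1+o(1)\big).$$
Since $s=\Theta(\log n)$, the remaining terms $\tfrac{ks}{2}\ln\!\big(\tfrac{es}{k}\big)$ and $\tfrac{ks}{2}\ln\tfrac{1}{1-p}$ are of order $O\big(\log n\,\log\log n\big)=o\big((\log n)^2\big)$ and do not affect the sign. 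Because $t-t^2/2<0$ for $t>2$, the exponent tends to $-\infty$, so the union bound tends to $0$. Combining this with the lower bound yields $\alpha_k(\mathbbmsl{G}(n, p))=\big(2+o(1)\big)\log_{\tfrac{1}{1-p}}n$ with high probability.
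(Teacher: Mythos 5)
Your proposal is correct and follows essentially the same route as the paper: the lower bound via $\alpha_k\geqslant\alpha$ together with Theorem \ref{matula}, and the upper bound via a first-moment/union bound over $s$-sets, reducing ``maximum degree at most $k$'' to ``at most $ks/2$ edges'' and applying Lemma \ref{sudakov} to the binomial edge count of $G[S]$, exactly as in the paper's bound $\binom{n}{s}\binom{\binom{s}{2}}{ks/2}q^{\binom{s}{2}-ks/2}$. The only cosmetic differences are that you take $s=\lceil(2+\epsilon)\log_{1/(1-p)}n\rceil$ and compare logarithmic terms, whereas the paper chooses $s=2\log_b n+2k\log_b\log_b n$ and simplifies to $(Cn^2s^kq^s)^{s/2}$, which yields a marginally sharper explicit second-order bound but the same $(2+o(1))$ conclusion.
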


\begin{proof}
Let $G\thicksim\mathbbmsl{G}(n, p)$, $q=1-p$, and $b=1/q$.
For any integer $s\geqslant1$, let $X_s$ be the number of induced subgraphs in $G$ on $s$ vertices with    at most $sk/2$ edges. Clearly,  $X_s=0$ implies  $\alpha_k(G)\leqslant s-1$.
For any  $S\subseteq V(G)$  with  $|S|=s$, let $Y_S$ count the number of edges  in $G[S]$. By Lemma \ref{sudakov},
\begin{align*}
\mathbbmsl{E}(X_s)&=\sum_{{S\subseteq V(G)}\atop{|S|=s}}\mathbbmsl{P}\left(Y_S\leqslant \tfrac{ks}{2}\right)\\
&\leqslant{{n}\choose{s}}{{{s}\choose{2}}\choose{\frac{ks}{2}}}q^{{{s}\choose{2}}-\frac{ks}{2}}\\
&\leqslant\left(\frac{n\text{\sl e}}{s}\right)^s\left(\frac{\text{\sl e}{{s}\choose{2}}}{\frac{ks}{2}}\right)^{\frac{ks}{2}}q^{{{s}\choose{2}}-\frac{ks}{2}}\\
&\leqslant\left(\left(\frac{n\text{\sl e}}{s}\right)^2\left(\frac{s\text{\sl e}}{k}\right)^kq^{s-k-1}\right)^{\frac{s}{2}}\\
&\leqslant\left(Cn^2s^kq^s\right)^{\frac{s}{2}},
\end{align*}
for some f{}ixed value  $C$.
Put $s=2\log_bn+2k\log_b\log_bn$. We have
$$\log_b\left(n^2s^kq^s\right)=2\log_bn+k\log_bs-s\longrightarrow-\infty$$
and so $n^2s^kq^s\rightarrow0$ as $n$ tends  to inf{}inity. Therefore,  $\mathbbmsl{E}(X_s)\rightarrow0$   and since  $\mathbbmsl{P}(X_s>0)\leqslant\mathbbmsl{E}(X_s)$ by the Markov    inequality, it follows that  $\mathbbmsl{P}(X_s>0)\rightarrow0$ as $n$ goes to inf{}inity. This proves that     $\alpha_k(G)\leqslant2\log_bn+2k\log_b\log_bn-1$ with high probability. Now, the assertion follows from the fact  $\alpha_k(G)\geqslant\alpha(G)$ and Theorem \ref{matula}.
\end{proof}

The following lemma is later used to prove the lower bound on $\text{\sl sat}(G, K_{1, r})$.

\begin{lem}\label{anyg}
For every graph $G$ on $n$ vertices   and    integer $r\geqslant2$,
$$\text{\sl sat}(G, K_{1, r})\geqslant\frac{(r-1)\big(n-\alpha_{r-2}(G)\big)}{2}.$$
\end{lem}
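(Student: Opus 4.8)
The plan is to take an arbitrary $K_{1, r}$-saturated subgraph $H$ of $G$ and bound its number of edges from below by a degree-counting argument. First I would record the basic translation: a graph contains $K_{1, r}$ as a subgraph precisely when it has a vertex of degree at least $r$, so $H$ being $K_{1, r}$-free is equivalent to $H$ having maximum degree at most $r-1$. Moreover, saturation means that for every edge $\{u, v\}\in E(G)\setminus E(H)$, adding $\{u, v\}$ to $H$ produces a vertex of degree $r$; since adding an edge raises each endpoint's degree by one, this forces at least one of $u$ and $v$ to have degree exactly $r-1$ in $H$.

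The key idea is to isolate the set $S$ of vertices whose degree in $H$ is at most $r-2$, and to show that $S$ is $(r-2)$-independent in $G$. I would argue that every edge of $G$ with both endpoints in $S$ must already belong to $H$: if $\{u, v\}\in E(G)\setminus E(H)$ with $u, v\in S$, then adding $\{u, v\}$ to $H$ could not create a vertex of degree $r$, because both endpoints start with degree at most $r-2$ and hence attain degree at most $r-1$, contradicting saturation. Consequently, for each $u\in S$, its neighbors inside $S$ in $G$ coincide with its neighbors inside $S$ in $H$, so the degree of $u$ in $G[S]$ is at most its degree in $H$, which is at most $r-2$. Hence $G[S]$ has maximum degree at most $r-2$, that is, $S$ is $(r-2)$-independent, and therefore $|S|\leqslant\alpha_{r-2}(G)$.

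Finally I would count degrees. Every vertex outside $S$ has degree exactly $r-1$ in $H$, so
$$2\,|E(H)|=\sum_{v\in V(G)}\deg_H(v)\geqslant(r-1)\big(n-|S|\big)\geqslant(r-1)\big(n-\alpha_{r-2}(G)\big),$$
which gives $|E(H)|\geqslant\tfrac{(r-1)(n-\alpha_{r-2}(G))}{2}$. Since $H$ was an arbitrary $K_{1, r}$-saturated subgraph of $G$, taking the minimum over all such $H$ yields the claimed lower bound on $\text{\sl sat}(G, K_{1, r})$.

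The main obstacle is the middle step: recognizing that the saturation condition forces all edges of $G$ inside the low-degree set $S$ to be present in $H$ already. Once this observation is in hand, identifying $S$ with an $(r-2)$-independent set and performing the closing degree count are routine.
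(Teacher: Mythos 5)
Your proof is correct and follows essentially the same approach as the paper: both isolate the set of vertices of $H$-degree at most $r-2$, use saturation to show this set is $(r-2)$-independent in $G$ (the paper phrases this as $G[A]=H[A]$), and finish with the same degree count. Your write-up merely spells out the justifications that the paper states more tersely.
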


\begin{proof}
Let $H$ be a   $ K_{1, r}$-saturated   subgraph of   $G$. Let $A$ be the set of vertices of $H$  with   degree  at  most $r-2$ in $H$.  Since $H$ is  a  $ K_{1, r}$-saturated subgraph of   $G$, every vertex in  $\overline{A}=V(G)\setminus A$ is of degree $r-1$ in $H$ and   $G[A]=H[A]$. This implies that  $|A|\leqslant\alpha_{r-2}(G)$. We hence obtain that
\begin{equation*}
|E(H)|\geqslant\mathlarger{\mathlarger{\tfrac{1}{2}}}\sum_{v\in\overline{A}}\deg_H(v)\geqslant\frac{(r-1)\big(n-\alpha_{r-2}(G)\big)}{2}. \qedhere
\end{equation*}
\end{proof}

We will make use of the  next    theorem     in the  proof of our  main result.

\begin{thm}\label{alon}
{\rm (Alon$\text{\bf--}$F\"uredi \cite{alo})}
Let $G\thicksim\mathbbmsl{G}(n, p)$ be a random graph  and  $H$ be a f{}ixed   graph on $n$ vertices with maximum
degree $\mathnormal{\Delta}$, where $(\mathnormal{\Delta}^2+1)^2<n$. If
$$p^\mathnormal{\Delta}>\frac{10\log\left\lfloor\frac{n}{\mathnormal{\Delta}^2+1}\right\rfloor}{\left\lfloor\frac{n}{\mathnormal{\Delta}^2+1}\right\rfloor},$$
then the probability that $G$ does not contain a copy of $H$ is smaller than $1/n$.
\end{thm}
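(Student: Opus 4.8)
The plan is to realize $H$ as a spanning subgraph of $G$ by constructing a bijection $\varphi\colon V(H)\to V(G)$ that carries every edge of $H$ to an edge of $G$, embedding the vertices of $H$ in rounds indexed by the color classes of the square $H^2$. First I would pass to $H^2$, the graph on $V(H)$ joining two vertices exactly when their distance in $H$ is at most $2$; since $H$ has maximum degree $\Delta$, the maximum degree of $H^2$ is at most $\Delta^2$. By the Hajnal--Szemer\'edi theorem on equitable colorings, $H^2$ then admits an equitable coloring with $\Delta^2+1$ colors, giving classes $C_1,\ldots,C_{\Delta^2+1}$, each of size $t$ or $t+1$ where $t=\lfloor n/(\Delta^2+1)\rfloor$. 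The property I will exploit is that any two vertices in a common class are at distance at least $3$ in $H$, hence are non-adjacent and share no neighbor. The hypothesis $(\Delta^2+1)^2<n$ gives $\Delta^2+1<\sqrt n$ and therefore $t>\sqrt n-1\to\infty$.

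Next I would fix a partition of $V(G)$ into blocks $B_1,\ldots,B_{\Delta^2+1}$ with $|B_i|=|C_i|$ and embed $C_1,C_2,\ldots$ in turn, sending each $C_i$ bijectively onto $B_i$. Assume $C_1,\ldots,C_{i-1}$ are already placed. For $v\in C_i$ put $N_v=N_H(v)\cap(C_1\cup\cdots\cup C_{i-1})$, so $|N_v|\leqslant\Delta$, and form the bipartite \emph{compatibility graph} $\Gamma_i$ on parts $C_i$ and $B_i$, joining $v$ to $u$ whenever $u$ is adjacent in $G$ to every vertex of $\varphi(N_v)$. A perfect matching of $\Gamma_i$ extends $\varphi$ over $C_i$ while realizing all $H$-edges from $C_i$ to earlier classes; since no edge of $H$ lies inside a class, once all rounds succeed every edge of $H$ is embedded. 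Moreover, placing $C_1,\ldots,C_{i-1}$ only examines edges of $G$ inside $B_1\cup\cdots\cup B_{i-1}$, whereas every edge relevant to $\Gamma_i$ joins $B_i$ to an earlier block and so is untouched; thus I may condition on any fixed earlier embedding and treat the edges of $\Gamma_i$ as fresh.

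The structural key is that, under this conditioning, the edges of $\Gamma_i$ are mutually independent. The event $vu\in\Gamma_i$ is the conjunction over $w\in N_v$ of the edge events $u\sim_G\varphi(w)$, supported on the edge set $\{\,u\varphi(w):w\in N_v\,\}$. For $(v,u)\neq(v',u')$ these supports are disjoint: if $u\neq u'$ the edges differ in their $B_i$-endpoint, while if $u=u'$ and $v\neq v'$ disjointness follows from $N_v\cap N_{v'}=\emptyset$, which holds because $v$ and $v'$ have no common neighbor in $H$. As the edges of $G$ are independent, so are the events $vu\in\Gamma_i$, and each occurs with probability $p^{|N_v|}\geqslant p^{\Delta}$. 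Since having no perfect matching is a decreasing event, $\Gamma_i$ stochastically dominates a balanced random bipartite graph on parts of size $m\in\{t,t+1\}$ with edge probability $p^{\Delta}$, and it suffices to control the failure probability there.

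The main work, and the step I expect to be the real obstacle, is this perfect-matching estimate. Writing $q=p^{\Delta}$, I would bound the probability that a balanced bipartite random graph with parts of size $m\geqslant t$ and edge probability $q$ has no perfect matching. By Hall's theorem such a failure forces a minimal violating set of some size $k$, and summing the corresponding probabilities over $k$ the dominant term is the order-$t(1-q)^{t}\leqslant t\,\mathrm{e}^{-qt}$ contribution of a single deficient vertex. The hypothesis $p^{\Delta}>10\log t/t$ gives $qt>10\log t$, so this term is at most $t^{-9}$, and a careful summation over the remaining sizes $k$ keeps the per-round failure below $1/n^{2}$ once $n$ is large; the factor $10$ together with $(\Delta^2+1)^2<n$ is exactly what furnishes this slack. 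A union bound over the at most $\Delta^2+1<\sqrt n$ rounds then bounds the probability that some round fails, and hence that $G$ does not contain $H$, by less than $1/n$, as claimed.
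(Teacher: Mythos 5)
The paper itself contains no proof of this statement; it is imported verbatim from Alon and F\"uredi \cite{alo}, so the only meaningful comparison is with that original argument --- and your reconstruction is essentially exactly it. The skeleton you give is correct and complete in its main ideas: the Hajnal--Szemer\'edi equitable coloring of $H^2$ into $\Delta^2+1$ classes, the round-by-round embedding onto blocks of matching sizes, the observation that distance-$\geqslant 3$ separation within a class makes the compatibility edges have pairwise disjoint supports (hence mutually independent, each with probability at least $p^{\Delta}$), the monotone coupling to a balanced bipartite random graph, and the Hall-type failure estimate, where $p^{\Delta}t>10\log t$ together with $t>\sqrt{n}-1$ leaves ample slack for the union bound over the fewer than $\sqrt{n}$ rounds. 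The one place you are sketchier than a full write-up is the summation over Hall-violating sets of size $k\geqslant 2$ (and the symmetric treatment of large violating sets on the other side), but that computation is standard and the factor $10$ absorbs it, so the proposal stands as a correct proof of the cited theorem.
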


Now we  are in the position to prove our main result.

\begin{thm}
For every f{}ixed number $p\in(0, 1)$ and f{}ixed integer $r\geqslant2$,
$$\text{\sl sat}\big(\mathbbmsl{G}(n, p), K_{1, r}\big)=\frac{(r-1)n}{2}-\big(1+o(1)\big)(r-1)\log_{\tfrac{1}{1-p}}n$$
with high probability.
\end{thm}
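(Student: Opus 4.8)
The plan is to prove matching lower and upper bounds, each of the form $\tfrac{(r-1)n}{2}$ minus a term of order $\log_b n$, where I write $b=1/(1-p)$ throughout.

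The lower bound is essentially immediate from the tools already assembled. By Lemma \ref{anyg}, every $K_{1,r}$-saturated subgraph of $G\thicksim\mathbbmsl{G}(n,p)$ has at least $\tfrac{(r-1)(n-\alpha_{r-2}(G))}{2}$ edges. For $r\geqslant3$ I apply Theorem \ref{k} with $k=r-2\geqslant1$, and for $r=2$ I apply Theorem \ref{matula} (since $\alpha_0=\alpha$); in both cases $\alpha_{r-2}(G)=(2+o(1))\log_b n$ with high probability. Substituting gives $\text{\sl sat}(G,K_{1,r})\geqslant\tfrac{(r-1)n}{2}-(1+o(1))(r-1)\log_b n$, as required.

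For the upper bound I would exhibit a sparse $K_{1,r}$-saturated subgraph $H\subseteq G$ of the following shape: choose an independent set $A$ of $G$ with $|A|=(2-o(1))\log_b n$, leave every vertex of $A$ isolated in $H$, and let $H[\overline{A}]$ be an $(r-1)$-regular spanning subgraph of $G[\overline{A}]$, i.e.\ an $(r-1)$-factor. Such an $H$ is $K_{1,r}$-saturated: its maximum degree is $r-1$, so it is $K_{1,r}$-free, and any edge of $G\setminus H$ has at least one endpoint in $\overline{A}$ (because $A$ is independent in $G$), which already has degree $r-1$, so adding the edge creates a $K_{1,r}$. Its size is exactly $\tfrac{(r-1)|\overline{A}|}{2}=\tfrac{(r-1)n}{2}-\tfrac{(r-1)|A|}{2}=\tfrac{(r-1)n}{2}-(1-o(1))(r-1)\log_b n$, matching the lower bound. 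Two existence facts must therefore be secured with high probability: an independent set $A$ of the stated size, and an $(r-1)$-factor on $\overline{A}$, after adjusting $|A|$ by at most one so that $(r-1)|\overline{A}|$ is even.

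The crux is that the independent set is defined by the \emph{absence} of edges while the factor needs the \emph{presence} of edges, and since a near-maximum independent set depends on the whole graph, one cannot naively feed $G[\overline{A}]$ into Theorem \ref{alon}. I would decouple the two requirements by a two-stage exposure. Fix a pool $P\subseteq V(G)$ with $|P|=\lceil n/\log n\rceil$ and reveal only the edges of $G$ inside $P$; since $|P|=n^{1-o(1)}$, Theorem \ref{matula} applied to $G[P]$ produces an independent set $A\subseteq P$ with $|A|=(2-o(1))\log_b|P|=(2-o(1))\log_b n$ with high probability, and $A$ is independent in all of $G$ because its internal pairs lie inside $P$. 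Every edge of $G$ with an endpoint outside $P$ remains unexposed, hence present independently with probability $p$; in particular, with $W=V(G)\setminus P$, the graph $G[W]$ is a genuine $\mathbbmsl{G}(|W|,p)$ that is independent of the choice of $A$. Building the $(r-1)$-factor on $\overline{A}=(P\setminus A)\cup W$ from these fresh edges is then the main obstacle, and I would carry it out in three moves. First, apply Theorem \ref{alon} to $G[W]$ to embed a fixed bounded-degree target, namely an $(r-1)$-regular graph in which a set $D\subseteq W$ of $(r-1)|P\setminus A|$ vertices is reduced to degree $r-2$; this succeeds with probability $>1-1/|W|$ because $\Delta=r-1$ is fixed and $p^{\,r-1}$ exceeds the threshold of Theorem \ref{alon} for large $n$. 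Second, expose the fresh edges between $P\setminus A$ and $D$ and, via a Hall/matching argument, assign each vertex of $P\setminus A$ exactly $r-1$ private neighbours in $D$; this is feasible with high probability since each such vertex has about $p|D|$ available neighbours and the two sides have equal total demand. Third, the union is an $(r-1)$-factor on $\overline{A}$. I expect the delicate points to be precisely keeping the randomness fresh through the exposure, aligning the degree deficits of $P\setminus A$ with the reserved set $D$, and the parity adjustment; once the factor is in place, combining it with the isolated set $A$ yields the required saturated subgraph, and the two bounds together give the claimed asymptotic formula.
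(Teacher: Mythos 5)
Your lower bound is exactly the paper's: Lemma \ref{anyg} plus Theorem \ref{k} (and you are in fact slightly more careful than the paper in invoking Theorem \ref{matula} for $r=2$, since Theorem \ref{k} is stated only for $k\geqslant1$). Your upper bound, however, takes a genuinely different route. The paper does not decouple the two requirements by multi-round exposure at all; it runs a second-moment argument on the indicators $X_S$ of the \emph{joint} event that $S$ is independent and $G[V(G)\setminus S]$ contains a fixed $(r-1)$-regular graph $L$. The observation that makes this work is that these two events depend on disjoint sets of vertex pairs and hence are independent, so Theorem \ref{alon} gives $\mathbbmsl{E}[X_S]\geqslant q^{\binom{\ell}{2}}\bigl(1-\tfrac{1}{n-\ell}\bigr)$ directly, while $\mathbbmsl{E}[X_SX_T]\leqslant q^{2\binom{\ell}{2}-\binom{i}{2}}$; the variance computation then reduces to the standard one for the independence number (Theorem 7.3 of Frieze--Karo\'nski). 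What the paper's route buys: Alon--F\"uredi plants the whole $(r-1)$-factor on the complement of $S$ in one shot, so no patching or matching argument is ever needed, and the correlation issue you identify dissolves rather than having to be engineered around. What your route buys: a constructive, algorithmic proof (pool $P$, degree-deficient template on $W$, patching matching) whose exposure structure would adapt to settings where a second moment over all $\ell$-sets is awkward; the loss from the pool is harmless since $\log_b|P|=(1-o(1))\log_b n$.

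The one place where your sketch has a real gap is the Hall step. The justification you give --- each vertex of $P\setminus A$ has about $p|D|$ neighbours in $D$ and the total demands agree --- is not a proof: large minimum degree plus equal demand does not by itself yield the required semi-matching (in the blown-up bipartite graph the minimum degree on the $D$ side is only about $p|D|/(r-1)$, which is below the $|D|/2$ threshold for small $p$ or large $r$). What must be verified is Hall's condition for the blow-up, namely $|N(T')\cap D|\geqslant(r-1)|T'|$ for every $T'\subseteq P\setminus A$, and a naive union bound over all subsets fails for mid-sized sets when $p$ is small (terms of the form $2^{m}q^{\Omega(pm)}$ need not vanish). The standard fix --- handle small sets by minimum degree, and for large sets pass to the complementary violating set on the $D$ side, since any Hall violation produces a violating set of size at most half on one of the two sides --- does go through here because both sides have polynomial size and $p$ is constant, but as written this step is asserted rather than proved. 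The other two delicate points you flag are handled correctly: the parity adjustment (changing $|A|$ by one fixes simultaneously the parity of $(r-1)|\overline{A}|$ and of the template's degree sum, as these agree modulo $2$), and the freshness of randomness (the template fed to Theorem \ref{alon} depends on the random $|A|$, so Theorem \ref{alon} must be applied conditionally on the first exposure round, which is legitimate since its $1/|W|$ failure bound is uniform in the conditioning). With the Hall analysis written out, your argument is a correct, self-contained alternative to the paper's proof.
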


\begin{proof}
Let $G\thicksim\mathbbmsl{G}(n, p)$, $q=1-p$, and $b=1/q$.
Using Theorem \ref{k} and Lemma \ref{anyg}, we f{}ind  that
$$\mathlarger{\lim}_{n\rightarrow\infty}\mathbbmsl{P}\left(\text{\sl sat}(G, K_{1, r})\geqslant\tfrac{(r-1)n}{2}-(1+\epsilon)(r-1)\log_bn\right)=1,$$  for any $\epsilon>0$.
So, it suf{}f{}ices to prove    that
\begin{equation}\label{1}\mathlarger{\lim}_{n\rightarrow\infty}\mathbbmsl{P}\left(\text{\sl sat}(G, K_{1, r})\leqslant\tfrac{(r-1)n}{2}-(1-\epsilon)(r-1)\log_bn\right)=1,\end{equation} for any $\epsilon>0$.
Fix $\epsilon$ and let  $\ell$ be  the least  integer such that   $\ell\geqslant(2-2\epsilon)\log_bn$ and  $(n-\ell)(r-1)$ is even. Also, f{}ix a regular graph $L$ on   $n-\ell$ vertices with   degree  $r-1$. For any   $S\subseteq V(G)$ with $|S|=\ell$, let
$$X_S=\left\{\begin{array}{ll}
1, &  \mbox{\small if  $S$ is an     independent set in $G$  and}\\
\vspace{-4.5mm}\\
&  \mbox{\small  $G[V(G)\setminus S]$ has a copy of $L$ as a subgraph;}\\
\vspace{-0.5mm}\\
0,  &   \mbox{\small otherwise.}\\
\end{array}\right.$$
We assume $n$ to be large enough whenever needed. It follows from  Theorem \ref{alon} that  $\mathbbmsl{E}[X_S]\geqslant q^{{\ell}\choose{2}}\left(1-\tfrac{1}{n-\ell}\right)$. Therefore, if we let $$X=\mathlarger{\mathlarger{\sum}}_{{S\subseteq V(G)}\atop{|S|=\ell}}X_S,$$ then  $\mathbbmsl{E}[X]\geqslant{{n}\choose{\ell}}q^{{\ell}\choose{2}}\left(1-\tfrac{1}{n-\ell}\right)$.
Moreover, for every subsets $S, T\subseteq V(G)$ of size $\ell$ with $|S\cap T|=i$, we easily see that   $\mathbbmsl{E}[X_SX_T]\leqslant q^{2{{\ell}\choose{2}}-{{i}\choose{2}}}$.
By the Chebyshev  inequality and noting that $n-\ell$ goes to inf{}inity, we have
\begin{align*}
\mathbbmsl{P}(X=0)&\leqslant\frac{\mathrm{var}(X)}{\mathbbmsl{E}[X]^2}\\
&=\mathlarger{\mathlarger{\sum}}^{}_{{S, T\subseteq V(G)}\atop{|S|=|T|=\ell}}\frac{\mathbbmsl{E}[X_SX_T]-\mathbbmsl{E}[X_S]\mathbbmsl{E}[X_T]}{\mathbbmsl{E}[X]^2}\\
&=\mathlarger{\mathlarger{\sum}}_{i=0}^\ell\,\mathlarger{\mathlarger{\sum}}^{}_{{S, T\subseteq V(G)}\atop{\mathlarger{{|S|=|T|=\ell}}\atop{\mathlarger{|S\cap T|=i}}}}\frac{\mathbbmsl{E}[X_SX_T]-\mathbbmsl{E}[X_S]\mathbbmsl{E}[X_T]}{\mathbbmsl{E}[X]^2}\\
&\leqslant\mathsmaller{{{n}\choose{\ell}}}\mathlarger{\mathlarger{\sum}}_{i=0}^\ell\,\,\frac{{{\ell}\choose{i}}
{{n-\ell}\choose{\ell-i}}\left(q^{2{{\ell}\choose{2}}-{{i}\choose{2}}}-q^{2{{\ell}\choose{2}}}\left(1-\tfrac{1}{n-\ell}\right)^2\right)}
{{{n}\choose{\ell}}^2q^{2{{\ell}\choose{2}}}\left(1-\tfrac{1}{n-\ell}\right)^2}\\
&=\mathlarger{\mathlarger{\sum}}_{i=0}^\ell\,\,
\frac{{{\ell}\choose{i}}{{n-\ell}\choose{\ell-i}}}{{{n}\choose{\ell}}q^{{i}\choose{2}}}\,\,\frac{1-q^{{i}\choose{2}}\left(1-\tfrac{1}{n-\ell}\right)^2}
{\left(1-\tfrac{1}{n-\ell}\right)^2}\\
&\leqslant\frac{{{n-\ell}\choose{\ell}}+\ell{{n-\ell}\choose{\ell-1}}}{{{n}\choose{\ell}}}\,\,
\frac{1-\left(1-\tfrac{1}{n-\ell}\right)^2}
{\left(1-\tfrac{1}{n-\ell}\right)^2}+2\sum_{i=2}^\ell
\frac{{{\ell}\choose{i}}{{n-\ell}\choose{\ell-i}}}{{{n}\choose{\ell}}q^{{i}\choose{2}}}\\
&\leqslant(\ell+1)\frac{2n-2\ell-1}{(n-\ell-1)^2}+2\sum_{i=2}^\ell
\frac{{{\ell}\choose{i}}{{n-\ell}\choose{\ell-i}}}{{{n}\choose{\ell}}q^{{i}\choose{2}}}.
\end{align*}
Using  the computations given in the proof of Theorem\,7.3 of   \cite{fri}, the last summation above converges to $0$ as $n\rightarrow\infty$ and  hence   $\mathbbmsl{P}(X=0)=o(1)$. This shows that  with high probability    there is $S\subseteq V(G)$ with $|S|=\ell$ such that $S$ is an  independent set in $G$  and $G[V(G)\setminus S]$ has a copy $L'$  of $L$ as a subgraph. Denote the spanning subgraph of $G$ with   edge set $E(L')$ by $H$. It is easily seen that $H$ is a $K_{1, r}$-saturated subgraph of $G$ and $$E(H)=\tfrac{(n-\ell)(r-1)}{2}\leqslant\tfrac{(r-1)n}{2}-(1-\epsilon)(r-1)\log_bn,$$ which  concludes \eqref{1}, as required.
\end{proof}

\section*{Acknowledgments}

The authors would like  to thank     the  anonymous referees for their  helpful comments and   corrections  on  a draft  version of this paper.

\end{document}